\theoremstyle{plain}
\newtheorem{thm}{\protect\theoremname}[section]
\theoremstyle{definition}
\newtheorem{example}[thm]{\protect\examplename}
\theoremstyle{remark}
\newtheorem{rem}[thm]{\protect\remarkname}
\theoremstyle{plain}
\newtheorem{lem}[thm]{\protect\lemmaname}
\theoremstyle{definition}
\newtheorem{defn}[thm]{\protect\definitionname}
\numberwithin{equation}{section}
\newcommand{\R}{{\mathbb R}}
\newcommand{\X}{{\R^d}}
\newcommand{\E}{{\mathbb E}}
\newcommand{\ga}{\gamma}
\providecommand{\definitionname}{Definition}
\providecommand{\definitionname}{Definition}
\providecommand{\examplename}{Example}
\providecommand{\lemmaname}{Lemma}
\providecommand{\remarkname}{Remark}
\providecommand{\theoremname}{Theorem}
\begin{document}
\title{Green Measures for Time Changed Markov Processes}
\author{\textbf{Yuri Kondratiev}\\
 Department of Mathematics, University of Bielefeld, \\
 D-33615 Bielefeld, Germany,\\
 Dragomanov University, Kiev, Ukraine\\
 Email: kondrat@math.uni-bielefeld.de\and\textbf{ Jos{\'e} Lu{\'i}s
da Silva},\\
 CIMA, University of Madeira, Campus da Penteada,\\
 9020-105 Funchal, Portugal.\\
 Email: joses@staff.uma.pt}
\date{\today}

\maketitle
 
\begin{abstract}
In this paper we study Green measures for certain classes of random
time change Markov processes where the random time change are inverse
subordinators. We show the existence of the Green measure for these
processes under the condition of the existence of the Green measure
of the original Markov processes and they coincide. Applications to
fractional dynamics in given.

\emph{Keywords}: Markov processes, Green measures, random time change
processes, asymptotic behavior

\emph{AMS Subject Classification 2010}: 60J65, 47D07, 35R11, 60G52.
\end{abstract}
\tableofcontents{}

\section{Introduction}

One of the most important questions in the theory of random processes
is related with the study of their asymptotic behavior. There are
several possibilities to formulate such questions. For example, let
$X(t)$, $t\ge0$ be a random process in $\X$ such that $X(0)=x\in\X$.
Denote by $\mu_{t}^{x}$ the one dimensional distribution of the process
at time $t$. Then the natural question is the limiting behavior of
$\mu_{t}^{x}$ for $t\to\infty$. Of course, we can expect a positive
answer to this question only for certain particular classes of these
processes. In the case of Markov processes, an essential technique
to study the time asymptotic is related with the Fokker-Planck equation
\[
\frac{\partial}{\partial t}\mu_{t}^{x}=L^{\ast}\mu_{t}^{x},
\]
where $L$ is the generator of the Markov process. In that case, $\mu_{t}^{x}$
is nothing but the transition probability measure $P_{t}(x,\mathrm{d}y)$
or the heat kernel for $L$ which may be analyzed for certain particular
cases, see e.g., \cite{Grigoryan2012,Grigoryan2018a} and the wide
list of references therein. On the other hand, even for very simple
classes of processes the time-space behavior of $P_{t}(x,\mathrm{d}y)$
may be very complicated, see \cite{Grigoryan2018} for the analysis
of continuous time random walks (compound Poisson processes) in $\X$.

An alternative way is to consider averaged characteristics of Markov
processes. In particular, we introduce the Green measure 
\[
\mathcal{G}(x,\mathrm{d}y):=\int_{0}^{\infty}P_{t}(x,\mathrm{d}y)\,\mathrm{d}t.
\]
The notion of the Green measure is closely related to the concept
of potential in stochastic analysis, see \cite{KdS2020} for details.
In the latter paper we have shown the existence of Green measures
for certain classes o Markov processes and analyzed their properties.

In this paper we are interested in transformations of Markov processes
by means of independent random time changes. The resulting process
is again a Markov process. In particular, as random time change we
consider inverse of subordinators. In the literature most of the results
in this direction are related with inverse stable subordinators, see
e.g., \cite{Meerschaert2002,Becker-Kern2004,Meerschaert2006,Baeumer2009,Magdziarz2015}.
In \cite{Kobayashi2020} the authors study the spectral heat contents
for time changed Brownian motions where the time change is given either
by a subordinator or an inverse subordinator with the underlying Laplace
exponent being regularly varying at infinity with index $\beta\in(0,1)$.
But it is also possible to consider more general inverse subordinators
and study such random processes and related properties, see e.g.,
\cite{dSKK16,KKdS19,KKS2018}.

Let $X$ be a Markov process which admits a Green measure $\mathcal{G}(x,\mathrm{d}y)$
and $Y$ its random time change by an inverse subordinator. Our aim
is to study the asymptotic behaviour of the process $Y$ for different
classes of inverse subordinators. The first main point concerns the
existence of Green measures for these time changed processes, that
is, applying the above definition of Green measure leads to divergent
integrals in all interesting cases. To overcome this difficulty we
introduce the concept of renormalized Green measure 
\[
\mathcal{G}_{r}(x,\mathrm{d}y):=\lim_{T\to\infty}\frac{1}{N(T)}\int_{0}^{T}\nu_{t}^{x}(\mathrm{d}y)\,\mathrm{d}t,
\]
where $\nu_{t}^{x}$ in the marginal distribution of $Y(t)$. The
renormalization $N(T)$ is uniquely defined by the inverse subordinator
under consideration, see \eqref{eq:limit-G-k} and \eqref{eq:normalization}
below. Such kind of normalizations are well known in the theory of
additive functionals for random processes. This enable us to state
the main contribution of this paper as follows: If the initial Markov
process has a Green measure, then the time changed process will have
a renormalized Green measure which coincides with the Green measure
for the Markov process, see Theorem \ref{thm:main-RTC-GM} below.
An interpretation of this result is very easy. In the time changed
process the evolution is delayed by the random environment and as
a result is slower. That means a slower decay in $t$ leads to a divergent
integral in the definition of the Green measure.

The paper is organized as follows. In Section \ref{sec:RT-FA} we
describe the class of subordinators we are interested in as well as
the corresponding inverse subordinators. The main assumption of these
classes is given in terms of the corresponding Laplace exponent, see
assumption (H) below. In addition, we recall the a result on the asymptotic
relating the density of the inverse subordinator and admissible kernels
satisfying (H), see Theorem \ref{thm:main-result}. We provide many
examples which fulfill the assumptions in Example \ref{exa:alpha-stable1}.
Sections \ref{sec:MP-RT} and \ref{sec:RGM} we introduce the main
object needed and show the main result of the paper, see Theorem \ref{thm:main-RTC-GM}.
Finally, in Section \ref{sec:Applications-FD} we make an application
to fractional dynamics for the special class of Markov processes known
as compound Poisson processes. More precisely, if $u(t,x)$ denotes
the solution of the Kolmogorov equation and $v(t,x)$ is the solution
of the associated fractional evolution equation, then the following
average result holds, see Theorem \ref{thm:asymptotic-FKE}
\[
\frac{1}{N(t)}\int_{0}^{t}v(s,x)\,\mathrm{d}s\sim\int_{0}^{\infty}u(s,x)\,\mathrm{d}s=\int_{\X}f(y)\mathcal{G}(x,\mathrm{d}y),\quad t\to\infty,
\]
where $f$ is a suitable initial data. 

\section{Random Times and Fractional Analysis}

\label{sec:RT-FA}In this section we introduce the classes of inverse
subordinators we are interested in. Associated to these classes we
define a kernel $k\in L_{\mathrm{loc}}^{1}(\mathbb{R}_{+})$ which
is used to define a general fractional derivatives (GFD), see \cite{Kochubei11}
for details and applications to fractional differential equations.
These admissible kernels $k$ are characterized in terms of their
Laplace transforms $\mathcal{K}(\lambda)$ as $\lambda\to0$, see
assumption (H) below.

Let $S=\{S(t),\;t\ge0\}$ be a subordinator without drift starting
at zero, that is, an increasing L{\'e}vy process starting at zero,
see \cite{Bertoin96} for more details. The Laplace transform of $S(t)$,
$t\ge0$ is expressed in terms of a Bernstein function $\Phi:[0,\infty)\longrightarrow[0,\infty)$
(also known as Laplace exponent) by 
\[
\mathbb{E}(e^{-\lambda S(t)})=e^{-t\Phi(\lambda)},\quad\lambda\ge0.
\]
The function $\Phi$ admits the representation 
\begin{equation}
\Phi(\lambda)=\int_{(0,\infty)}(1-e^{-\lambda\tau})\,\mathrm{d}\sigma(\tau),\label{eq:Levy-Khintchine}
\end{equation}
where the measure $\sigma$ (called L{\'e}vy measure) has support
in $[0,\infty)$ and fulfills 
\begin{equation}
\int_{(0,\infty)}(1\wedge\tau)\,\mathrm{d}\sigma(\tau)<\infty.\label{eq:Levy-condition}
\end{equation}
In what follows we assume that the L{\'e}vy measure $\sigma$ satisfy
\begin{equation}
\sigma\big((0,\infty)\big)=\infty.\label{eq:Levy-massumption}
\end{equation}
Using the L{\'e}vy measure $\sigma$ we define the kernel $k$ as
follows 
\begin{equation}
k:(0,\infty)\longrightarrow(0,\infty),\;t\mapsto k(t):=\sigma\big((t,\infty)\big).\label{eq:k}
\end{equation}
Its Laplace transform is denoted by $\mathcal{K}$, that is, for any
$\lambda\ge0$ one has 
\begin{equation}
\mathcal{K}(\lambda):=\int_{0}^{\infty}e^{-\lambda t}k(t)\,\mathrm{d}t.\label{eq:LT-k}
\end{equation}
The relation between the function $\mathcal{K}$ and the Laplace exponent
$\Phi$ is given by 
\begin{equation}
\Phi(\lambda)=\lambda\mathcal{K}(\lambda),\quad\forall\lambda\ge0.\label{eq:Laplace-exponent}
\end{equation}

In what follows we make the following assumption on the Laplace exponent
$\Phi(\lambda)$ of the subordinator $S$. 
\begin{description}
\item [{(H)}] $\Phi$ is a complete Bernstein function (that is, the L{\'e}vy
measure $\sigma$ is absolutely continuous with respect to the Lebesgue
measure) and the functions $\mathcal{K}$, $\Phi$ satisfy 
\begin{equation}
\mathcal{K}(\lambda)\to\infty,\text{ as \ensuremath{\lambda\to0}};\quad\mathcal{K}(\lambda)\to0,\text{ as \ensuremath{\lambda\to\infty}};\label{eq:H1}
\end{equation}
\begin{equation}
\Phi(\lambda)\to0,\text{ as \ensuremath{\lambda\to0}};\quad\Phi(\lambda)\to\infty,\text{ as \ensuremath{\lambda\to\infty}}.\label{eq:H2}
\end{equation}
\end{description}
\begin{example}
\label{exa:alpha-stable1} 
\begin{enumerate}
\item A classical example of a subordinator $S$ is the so-called $\alpha$-stable
process with index $\alpha\in(0,1)$. Specifically, a subordinator
is $\alpha$-stable if its Laplace exponent is 
\[
\Phi(\lambda)=\lambda^{\alpha}=\frac{\alpha}{\Gamma(1-\alpha)}\int_{0}^{\infty}(1-e^{-\lambda\tau})\tau^{-1-\alpha}\,\mathrm{d}\tau.
\]
In this case it follows that the L{\'e}vy measure is $\mathrm{d}\sigma_{\alpha}(\tau)=\frac{\alpha}{\Gamma(1-\alpha)}\tau^{-(1+\alpha)}\,\mathrm{d}\tau$,
the corresponding kernel $k_{\alpha}$ has the form $k_{\alpha}(t)=g_{1-\alpha}(t):=\frac{t^{-\alpha}}{\Gamma(1-\alpha)}$,
$t\ge0$ and its Laplace transform is $\mathcal{K}_{\alpha}(\lambda)=\lambda^{\alpha-1}$,
$\lambda\ge0$. 
\item The Gamma process $Y^{(a,b)}$ with parameters $a,b>0$ is another
example of a subordinator with Laplace exponent 
\[
\Phi_{(a,b)}(\lambda)=a\log\left(1+\frac{\lambda}{b}\right)=\int_{0}^{\infty}(1-e^{-\lambda\tau})a\tau^{-1}e^{-b\tau}\,\mathrm{d}\tau,
\]
the second equality is known as the Frullani integral. The L{\'e}vy
measure is given by $d\sigma_{(a,b)}(\tau)=a\tau^{-1}e^{-b\tau}\,\mathrm{d}\tau.$
The associated kernel $k_{(a,b)}(t)=a\Gamma(0,bt)$, $t>0$ and its
Laplace transform is $\mathcal{K}_{(a,b)}(\lambda)=a\lambda^{-1}\log(1+\frac{\lambda}{b})$,
$\lambda>0$. 
\item The truncated $\alpha$-stable subordinator (see Example 2.1-(ii)
in \cite{Chen2017}) $S_{\delta}$, $\delta>0$ is a driftless $\alpha$-stable
subordinator with L{\'e}vy measure given by 
\[
\mathrm{d}\sigma_{\delta}(\tau):=\frac{\alpha}{\Gamma(1-\alpha)}\tau^{-(1+\alpha)}1\!\!1_{(0,\delta]}(\tau)\,\mathrm{d}\tau,\qquad\delta>0.
\]
The corresponding Laplace exponent is 
\[
\Phi_{\delta}(\lambda)=\lambda^{\alpha}\left(1-\frac{\Gamma(-\alpha,\delta\lambda)}{\Gamma(-\alpha)}\right)+\frac{\delta^{-\alpha}}{\Gamma(1-\alpha)},
\]
where $\Gamma(\nu,z):=\int_{z}^{\infty}e^{-t}t^{\nu-1}\,\mathrm{d}t$
is the incomplete gamma function (see Section 8.3 in \cite{GR81})
and the associated kernel $k_{\delta}$ is given by 
\[
k_{\delta}(t):=\sigma_{\delta}\big((t,\infty)\big)=\frac{1\!\!1_{(0,\delta]}(t)}{\Gamma(1-\beta)}(t^{-\beta}-\delta^{-\beta}),\;t>0.
\]
\item Let $0<\beta<1$ and $0<\alpha<1$ be given and $S_{\alpha,\beta}(t)$,
$t\ge0$ the driftless subordinator with Laplace exponent given by
\[
\Phi_{\alpha,\beta}(\lambda)=\lambda^{\alpha}+\lambda^{\beta}.
\]
It is clear from item 1 above that the corresponding L{\'e}vy measure
$\sigma_{\alpha,\beta}$ is the sum of two L{\'e}vy measures, that
is, 
\[
\mathrm{d}\sigma_{\alpha,\beta}(\tau)=\mathrm{d}\sigma_{\alpha}(\tau)+\mathrm{d}\sigma_{\alpha}(\tau)=\frac{\alpha}{\Gamma(1-\alpha)}\tau^{-(1+\alpha)}\,\mathrm{d}\tau+\frac{\beta}{\Gamma(1-\beta)}\tau^{-(1+\beta)}\,\mathrm{d}\tau.
\]
Then the associated kernel $k_{\alpha,\beta}$ is 
\[
k_{\alpha,\beta}(t):=g_{1-\alpha}(t)+g_{1-\beta}(t)=\frac{t^{-\alpha}}{\Gamma(1-\alpha)}+\frac{t^{-\beta}}{\Gamma(1-\beta)},\;t>0
\]
and its Laplace transform is $\mathcal{K}_{\alpha,\beta}(\lambda)=\mathcal{K}_{\alpha}(\lambda)+\mathcal{K}_{\beta}(\lambda)=\lambda^{\alpha-1}+\lambda^{\beta-1}$,
$\lambda>0$. 
\item Kernel with exponential weight. Given $\gamma>0$ and $0<\alpha<1$
consider the subordinator with Laplace exponent 
\[
\Phi_{\gamma}(\lambda):=(\lambda+\gamma)^{\alpha}=\left(\frac{\lambda+\gamma}{\lambda}\right)^{1+\alpha}\frac{\alpha}{\Gamma(1-\alpha)}\int_{0}^{\infty}(1-e^{-\lambda\tau})\tau^{-1-\alpha}\,\mathrm{d}\tau.
\]
It follows that the L{\'e}vy measure is given by $\mathrm{d}\sigma_{\gamma}(\tau)=\left(\frac{\lambda+\gamma}{\lambda}\right)^{1+\alpha}\frac{\alpha}{\Gamma(1-\alpha)}\tau^{-(1+\alpha)}\mathrm{d}\tau$
which yields a kernel $k_{\gamma}$ with exponential weight, namely
\[
k_{\gamma}(t)=g_{1-\alpha}(t)e^{-\gamma t}=\frac{t^{-\alpha}}{\Gamma(1-\alpha)}e^{-\gamma t}.
\]
The corresponding Laplace transform of $k_{\gamma}$ is given by $\mathcal{K}_{\gamma}(\lambda)=\lambda^{-1}(\lambda+\gamma)^{\alpha}$,
$\lambda>0$. 
\end{enumerate}
\end{example}

Denote by $E$ the inverse process of the subordinator $S$, that
is, 
\begin{equation}
E(t):=\inf\{s\ge0\mid S(s)\ge t\}=\sup\{s\ge0\mid S(t)\le s\}.\label{eq:inverse-sub}
\end{equation}
For any $t\ge0$ we denote by $G_{t}^{k}(\tau):=G_{t}(\tau)$, $\tau\ge0$
the marginal density of $E(t)$ or, equivalently 
\[
G_{t}(\tau)\,\mathrm{d}\tau=\partial_{\tau}P(E(t)\le\tau)=\partial_{\tau}P(S(\tau)\ge t)=-\partial_{\tau}P(S(\tau)<t).
\]

As the density $G_{t}(\tau)$ plays an important role in what follows,
we collect the most important properties of it. 
\begin{rem}
\label{rem:distr-alphastab-E}If $S$ is the $\alpha$-stable process,
$\alpha\in(0,1)$, then the inverse process $E(t)$ has a Mittag-Leffler
distribution (cf.~Prop.~1(a) in \cite{Bingham1971}), namely 
\begin{equation}
\mathbb{E}(e^{-\lambda E(t)})=\int_{0}^{\infty}e^{-t\tau}G_{t}(\tau)\,\mathrm{d}\tau=\sum_{n=0}^{\infty}\frac{(-\lambda t^{\alpha})^{n}}{\Gamma(n\alpha+1)}=E_{\alpha}(-\lambda t^{\alpha}).\label{eq:Laplace-density-alpha}
\end{equation}
It follows from the asymptotic behavior of the Mittag-Leffler function
$E_{\alpha}$ that $\mathbb{E}(e^{-\lambda E(t)})\sim Ct^{-\alpha}$
as $t\to\infty$. Using the properties of the Mittag-Leffler function
$E_{\alpha}$, we can show that the density $G_{t}(\tau)$ is given
in terms of the Wright function $W_{\mu,\nu}$, namely $G_{t}(\tau)=t^{-\alpha}W_{-\alpha,1-\alpha}(\tau t^{-\alpha})$,
see \cite{Gorenflo1999} for more details. 
\end{rem}

For a general subordinator, the following lemma determines the $t$-Laplace
transform of $G_{t}(\tau)$, with $k$ and $\mathcal{K}$ given in
\eqref{eq:k} and \eqref{eq:LT-k}, respectively. For the proof see
\cite{Kochubei11} or Lemma~3.1 in \cite{Toaldo2015}. 
\begin{lem}
\label{lem:t-LT-G}The $t$-Laplace transform of the density $G_{t}(\tau)$
is given by 
\begin{equation}
\int_{0}^{\infty}e^{-\lambda t}G_{t}(\tau)\,\mathrm{d}t=\mathcal{K}(\lambda)e^{-\tau\lambda\mathcal{K}(\lambda)}.\label{eq:LT-G-t}
\end{equation}
The double ($\tau,t$)-Laplace transform of $G_{t}(\tau)$ is 
\begin{equation}
\int_{0}^{\infty}\int_{0}^{\infty}e^{-p\tau}e^{-\lambda t}G_{t}(\tau)\,\mathrm{d}t\,\mathrm{d}\tau=\frac{\mathcal{K}(\lambda)}{\lambda\mathcal{K}(\lambda)+p}.\label{eq:double-Laplace}
\end{equation}
\end{lem}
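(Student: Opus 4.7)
The plan is to work directly from the defining relation $\{E(t)\le\tau\}=\{S(\tau)\ge t\}$, compute first the Laplace transform in $t$ of the distribution function, and then differentiate in $\tau$ to obtain the density's Laplace transform. The double transform then follows by an elementary integration.

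First I would fix $\tau\ge0$ and consider $F(t,\tau):=P(E(t)\le\tau)=P(S(\tau)\ge t)$. Writing $\mu_\tau$ for the law of $S(\tau)$ and applying Fubini,
\begin{equation*}
\int_0^\infty e^{-\lambda t}F(t,\tau)\,\mathrm{d}t
=\int_{[0,\infty)}\mu_\tau(\mathrm{d}s)\int_0^{s}e^{-\lambda t}\,\mathrm{d}t
=\frac{1}{\lambda}\bigl(1-\mathbb{E}(e^{-\lambda S(\tau)})\bigr)
=\frac{1-e^{-\tau\Phi(\lambda)}}{\lambda}.
\end{equation*}
Now I would differentiate in $\tau$, which gives the Laplace transform of $G_t(\tau)=\partial_\tau F(t,\tau)$; the interchange of $\partial_\tau$ with $\int_0^\infty e^{-\lambda t}(\cdot)\,\mathrm{d}t$ is justified by dominated convergence, since the right side above is a smooth function of $\tau$ for $\lambda>0$, and one can check that $\partial_\tau F(t,\tau)=G_t(\tau)$ is nonnegative and integrable in $t$ against $e^{-\lambda t}$ uniformly on compact $\tau$-intervals. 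This yields
\begin{equation*}
\int_0^\infty e^{-\lambda t}G_t(\tau)\,\mathrm{d}t=\frac{\Phi(\lambda)}{\lambda}e^{-\tau\Phi(\lambda)}=\mathcal{K}(\lambda)e^{-\tau\lambda\mathcal{K}(\lambda)},
\end{equation*}
using the identity $\Phi(\lambda)=\lambda\mathcal{K}(\lambda)$ from \eqref{eq:Laplace-exponent}. This proves \eqref{eq:LT-G-t}.

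For the second formula \eqref{eq:double-Laplace}, I would multiply the previous identity by $e^{-p\tau}$ and integrate over $\tau\in(0,\infty)$. Nonnegativity of $G_t(\tau)$ together with Tonelli's theorem permits the exchange of the $t$ and $\tau$ integrals, reducing the calculation to the elementary exponential integral
\begin{equation*}
\int_0^\infty e^{-(p+\lambda\mathcal{K}(\lambda))\tau}\,\mathrm{d}\tau=\frac{1}{p+\lambda\mathcal{K}(\lambda)},
\end{equation*}
which is finite because $\lambda\mathcal{K}(\lambda)=\Phi(\lambda)>0$ for $\lambda>0$ under assumption (H). Multiplying by $\mathcal{K}(\lambda)$ gives the claim.

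The only delicate point is the justification of differentiating under the integral sign in the step from $F(t,\tau)$ to $G_t(\tau)$; this is the main obstacle but a standard one, handled either by dominated convergence using the monotonicity of $\tau\mapsto F(t,\tau)$ and integrability of $e^{-\lambda t}$, or, alternatively, by first establishing the double transform via Fubini and then identifying the single transform as its inverse $p$-Laplace image. Everything else reduces to the Lévy--Khintchine formula $\mathbb{E}(e^{-\lambda S(\tau)})=e^{-\tau\Phi(\lambda)}$ and the relation \eqref{eq:Laplace-exponent}.
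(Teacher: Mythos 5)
Your derivation is correct: the hitting-time duality $\{E(t)\le\tau\}=\{S(\tau)\ge t\}$, the computation of the $t$-Laplace transform of the distribution function via $\mathbb{E}(e^{-\lambda S(\tau)})=e^{-\tau\Phi(\lambda)}$, differentiation in $\tau$ (justified, e.g., by integrating $G_t(\tau)$ over $\tau$-intervals and applying Tonelli), and the identity $\Phi(\lambda)=\lambda\mathcal{K}(\lambda)$ give exactly \eqref{eq:LT-G-t}, and \eqref{eq:double-Laplace} then follows by the elementary exponential integral. The paper itself offers no proof, only citations to \cite{Kochubei11} and \cite{Toaldo2015}, and your argument is essentially the standard one found there, so nothing further is needed.
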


For any $\alpha\in(0,1)$ the Caputo-Dzhrbashyan fractional derivative
of order $\alpha$ of a function $u$ is defined by (see e.g., \cite{KST2006}
and references therein) 
\begin{equation}
\big(\mathbb{D}_{t}^{\alpha}u\big)(t)=\frac{d}{dt}\int_{0}^{t}k_{\alpha}(t-\tau)u(\tau)\,\mathrm{d}\tau-k_{\alpha}(t)u(0),\quad t>0,\label{eq:Caputo-derivative}
\end{equation}
where $k_{\alpha}$ is given in Example \ref{exa:alpha-stable1}-(1),
that is, $k_{\alpha}(t)=g_{1-\alpha}(t)=\frac{t^{-\alpha}}{\Gamma(1-\alpha)}$,
$t>0$. In general, starting with a subordinator $S$ and the kernel
$k\in L_{\mathrm{loc}}^{1}(\mathbb{R}_{+})$ given as in \eqref{eq:k},
we may define a differential-convolution operator by 
\begin{equation}
\big(\mathbb{D}_{t}^{(k)}u\big)(t)=\frac{d}{dt}\int_{0}^{t}k(t-\tau)u(\tau)\,\mathrm{d}\tau-k(t)u(0),\;t>0.\label{eq:general-derivative}
\end{equation}
The operator $\mathbb{D}_{t}^{(k)}$ is also known as generalized
fractional derivative. The distributed order derivative $\mathbb{D}_{t}^{(\mu)}$
is an example of such operator, corresponding to 
\begin{equation}
k(t)=\int_{0}^{1}g_{1-\alpha}(t)\,\mathrm{d}\alpha=\int_{0}^{1}\frac{t^{-\alpha}}{\Gamma(1-\alpha)}\mu(\alpha)\,\mathrm{d}\alpha,\quad t>0,\label{eq:distributed-kernel}
\end{equation}
where $\mu(\tau)$, $0\le\tau\le1$ is a positive weight function
on $[0,1]$, see \cite{Atanackovic2009,Daftardar-Gejji2008,Hanyga2007,Kochubei2008,Gorenflo2005,Meerschaert2006}
for applications.

Now we introduce a suitable class of admissible $k(t)$ and state
and essential theorem which this class obeys, see Theorem \ref{thm:main-result}
below. 
\begin{defn}[Admissible kernels - $\mathbb{K}(\mathbb{R}_{+})$]
The subset $\mathbb{K}(\mathbb{R}_{+})\subset L_{\mathrm{loc}}^{1}(\mathbb{R}_{+})$
of admissible kernels $k$ is defined by those elements in $L_{\mathrm{loc}}^{1}(\mathbb{R}_{+})$
satisfying (H) such that for some $s_{0}>0$ 
\begin{equation}
\liminf_{\lambda\to0+}\frac{1}{\mathcal{K}(\lambda)}\int_{0}^{\nicefrac{s_{0}}{\lambda}}k(t)\,dt>0\tag*{(A1)}\label{eq:A1}
\end{equation}
and 
\begin{equation}
\lim_{{t,r\to\infty\atop \frac{t}{r}\to1}}\left(\int_{0}^{t}k(s)\,ds\right)\left(\int_{0}^{r}k(s)\,ds\right)^{-1}=1.\tag*{(A2)}\label{eq:A2}
\end{equation}
\end{defn}

The following theorem establishes an asymptotic relation between the
density $G_{t}(\tau)$ and the kernel $k\in\mathbb{K}(\mathbb{R}_{+})$.
For the proof, see \cite{KKdS19}. 
\begin{thm}
\label{thm:main-result}Let $\tau\in[0,\infty)$ be fixed and $k\in\mathbb{K}(\mathbb{R}_{+})$
a given admissible kernel. Define the map $G_{\cdot}(\tau):[0,\infty)\longrightarrow\mathbb{R}_{+}$,
$t\mapsto G_{t}(\tau)$ such that $\int_{0}^{\infty}e^{-\lambda t}G_{t}(\tau)\,dt$
exists for all $\lambda>0$. Then 
\begin{equation}
\lim_{t\to\infty}\left(\int_{0}^{t}G_{s}(\tau)\,ds\right)\left(\int_{0}^{t}k(s)\,ds\right)^{-1}=1\label{eq:limit-G-k}
\end{equation}
or 
\[
M_{t}\big(G_{t}(\tau)\big):=\frac{1}{t}\int_{0}^{t}G_{s}(\tau)\,ds\sim\frac{1}{t}\int_{0}^{t}k(s)\,ds=:M_{t}\big(k(t)\big),\quad t\to\infty
\]
and $M_{t}\big(G_{t}(\tau)\big)$ is uniformly bounded in $\tau\in\mathbb{R}_{+}$. 
\end{thm}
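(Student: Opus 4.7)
The plan is to pass to Laplace transforms: use Lemma \ref{lem:t-LT-G} to show that the Laplace transforms of the numerator and denominator of \eqref{eq:limit-G-k} are asymptotically equivalent as the Laplace parameter $\lambda\to 0^{+}$, and then invert that equivalence via a Tauberian argument tailored to the admissibility conditions \ref{eq:A1}--\ref{eq:A2}.

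First, set $H(t,\tau):=\int_{0}^{t}G_{s}(\tau)\,ds$ and $K(t):=\int_{0}^{t}k(s)\,ds$. Using the identity $\int_{0}^{\infty}e^{-\lambda t}H(t,\tau)\,dt=\lambda^{-1}\int_{0}^{\infty}e^{-\lambda t}G_{t}(\tau)\,dt$ together with \eqref{eq:LT-G-t}, and the analogous relation for $K$ in terms of \eqref{eq:LT-k}, I obtain
\begin{equation*}
\int_{0}^{\infty}e^{-\lambda t}H(t,\tau)\,dt=\frac{\mathcal{K}(\lambda)}{\lambda}\,e^{-\tau\Phi(\lambda)},\qquad\int_{0}^{\infty}e^{-\lambda t}K(t)\,dt=\frac{\mathcal{K}(\lambda)}{\lambda}.
\end{equation*}
Thus the ratio of Laplace transforms is exactly $e^{-\tau\Phi(\lambda)}$, and by (H) one has $\Phi(\lambda)=\lambda\mathcal{K}(\lambda)\to 0$ as $\lambda\to 0^{+}$, so this ratio tends to $1$ for every fixed $\tau\ge 0$.

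Next, I would invoke a Tauberian-type theorem to transfer this Laplace-side equivalence into $H(t,\tau)/K(t)\to 1$ as $t\to\infty$. Both $H(\cdot,\tau)$ and $K(\cdot)$ vanish at $0$ and are non-decreasing, which gives the monotonicity hypothesis. Because $K$ is not assumed to be regularly varying, the classical Karamata theorem cannot be applied off the shelf, and this is precisely the role of \ref{eq:A1}--\ref{eq:A2}: condition \ref{eq:A1} supplies the Tauberian lower bound $K(s_{0}/\lambda)\ge c\,\mathcal{K}(\lambda)$ linking the dual scales $\lambda\leftrightarrow 1/t$, while \ref{eq:A2} provides the ``slow variation along comparable arguments'' property of $K$ that rules out oscillations across scales. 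Together they upgrade the asymptotic equivalence of Laplace transforms to the claim $H(t,\tau)\sim K(t)$, which is \eqref{eq:limit-G-k}.

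Finally, for the uniform-in-$\tau$ boundedness of $M_{t}\bigl(G_{t}(\tau)\bigr)$, I would exploit the trivial pointwise bound $e^{-\tau\Phi(\lambda)}\le 1$ for $\tau\ge 0$, which at the Laplace level gives $\int_{0}^{\infty}e^{-\lambda t}H(t,\tau)\,dt\le\int_{0}^{\infty}e^{-\lambda t}K(t)\,dt$. The same Tauberian mechanism delivers $H(t,\tau)\le C\,K(t)$ uniformly in $\tau$ for $t$ large, and dividing by $t$ yields the required uniform bound on $M_{t}\bigl(G_{t}(\tau)\bigr)$. The genuinely delicate point is the Tauberian step: since regular variation is absent, one must verify that the scale-linking of \ref{eq:A1} and the uniform regularity of \ref{eq:A2} are precisely tailored to convert the equivalence $\mathcal{L}[H(\cdot,\tau)](\lambda)\sim\mathcal{L}[K](\lambda)$ into its real-variable counterpart, or appeal to the variant proved in \cite{KKdS19}. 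The rest is bookkeeping with Laplace transforms and monotonicity.
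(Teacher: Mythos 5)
The paper does not actually prove this theorem: it states it and defers the proof entirely to \cite{KKdS19}, so there is no in-paper argument to compare against line by line. Your Laplace-side computations are correct and do reproduce the skeleton of the cited proof: writing $H(t,\tau)=\int_{0}^{t}G_{s}(\tau)\,ds$ and $K(t)=\int_{0}^{t}k(s)\,ds$, the identities
\[
\int_{0}^{\infty}e^{-\lambda t}H(t,\tau)\,dt=\frac{\mathcal{K}(\lambda)}{\lambda}e^{-\tau\Phi(\lambda)},\qquad\int_{0}^{\infty}e^{-\lambda t}K(t)\,dt=\frac{\mathcal{K}(\lambda)}{\lambda}
\]
follow from \eqref{eq:LT-G-t}, \eqref{eq:LT-k} and \eqref{eq:Laplace-exponent}, and the ratio $e^{-\tau\Phi(\lambda)}\to1$ as $\lambda\to0^{+}$ by \eqref{eq:H2}. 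The uniformity in $\tau$ also comes out of $e^{-\tau\Phi(\lambda)}\le1$ essentially as you say (a direct monotonicity estimate such as $H(t,\tau)\le e\,\lambda^{-1}e^{\lambda t}\big|_{\lambda=1/t}\cdot\mathcal{K}(1/t)$ suffices here and does not even need a Tauberian theorem).

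The genuine gap is the inversion step, which is the entire mathematical content of the theorem. Asymptotic equivalence of the Laplace transforms of two nondecreasing functions does \emph{not} in general imply asymptotic equivalence of the functions themselves; Karamata's theorem requires regular variation, which is deliberately not assumed here. Your proposal names the right ingredients --- \ref{eq:A1} as a scale-linking lower bound between $K(s_{0}/\lambda)$ and $\mathcal{K}(\lambda)$, \ref{eq:A2} as a substitute for slow variation along comparable arguments --- but then asserts rather than proves that "together they upgrade" the transform equivalence to $H(t,\tau)\sim K(t)$. That upgrade is precisely the nontrivial Tauberian theorem (of Drozhzhinov--Zavyalov type) established in \cite{KKdS19}, and without either carrying out that argument or explicitly invoking it as a black box, the proof is incomplete. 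Note also one small technical point you should not skip: \eqref{eq:limit-G-k} needs the equivalence as $t\to\infty$ through \emph{all} reals, and \ref{eq:A2} is what rules out oscillation of $K$ between the discrete scales at which the Tauberian comparison is made; this is where a careless "bookkeeping" claim would actually fail.
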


\section{Markov Processes in Random Time}

\label{sec:MP-RT}Let $X=\{X(t),\;t\ge0\}$ be a Markov process in
$\X$ such that $X(0)=x\in\X$ almost surely. We are interested in
a new process $Y=\{Y(t),\;t\ge0\}$ which is constructed by a random
time change in $X$. Namely, if $E(t)$, $t\ge0$ denotes (as in Section
\ref{sec:RT-FA}) the inverse of a subordinator $S$ independent of
$X$, then we define $Y$ by 
\[
Y(t):=X(E(t)),\quad t\ge0.
\]
Note that inverse subordinators have found many applications in probability
theory, see \cite{Magdziarz2015} for a detailed discussions and several
related references. In particular, for their relationship with local
times of some Markov processes, see \cite{Bertoin96}. Similarities
between inverse subordinators and renewal processes also are well
studied. There are important applications of inverse subordinators
in finance and physics. We stress that random time processes may be
considered as mathematical realizations of the general concept of
biological time known in biology and ecology since the pioneering
works of V.~I.~Vernadsky \cite{Vernadsky1998}.

The first natural question which appear here concerns the possible
relations between the characteristics of the processes $X(t)$ and
$Y(t)$. To the best of our knowledge this question was for the first
time discussed by A.~Mura, M.S.~Taqqu and F.~Mainardi in \cite{Mura_Taqqu_Mainardi_08}.
The authors considered the diffusion processes with an implicitly
defined class of random times $E(t)$. Later similar questions were
discussed by several authors, see e.g., \cite{Toaldo2015} and references
therein.

The situation there may be described as follows. Define a function
\[
u(t,x):=\E[f(X(t))],\;t>0,\;x\in\X
\]
for a proper $f:\X\to\R$. This is the solution of the Kolmogorov
equation 
\begin{equation}
\frac{\partial}{\partial t}u(t,x)=Lu(t,x),\label{KE}
\end{equation}

\[
u(0,x)=f(x),
\]
where $L$ is the generator of the process $X(t)$. Let us define
a similar function for $Y(t)$: 
\[
v(t,x)=\E[f(Y(t))].
\]
Then this function satisfies the following fractional evolution equation:
\begin{equation}
D_{t}^{(k)}v(t,x)=Lv(t,x).\label{FKE}
\end{equation}
Moreover, the subordination formula holds:

\begin{equation}
v(t,x)=\int_{0}^{\infty}u(\tau,x)G_{t}(\tau)\,\mathrm{d}\tau,\label{SUB}
\end{equation}
where, as before, $G_{t}(\tau)$ is the density of the inverse subordinator
$E(t)$.

If $\mu_{t}^{x}$ and $\nu_{t}^{x}$ denote the marginal distributions
of $X(t)$ and $Y(t)$, respectively, then the subordination relations
for these distributions is given by 
\begin{equation}
\nu_{t}^{x}=\int_{0}^{\infty}\mu_{\tau}^{x}G_{t}(\tau)\,\mathrm{d}\tau.\label{SUBM}
\end{equation}
In the next section we use these relations to study the renormalized
Green measure associated to the subordinated process $Y$.

\section{Renormalized Green Measures}

\label{sec:RGM}Let $X$ be a Markov process and $Y$ be the time
changed process as in Section \ref{sec:MP-RT} with all our notations
from there. For every jump of the subordinator $S$ there is a corresponding
flat period of its inverse $E$. These flat periods represent trapping
events in which the test particle gets immobilized in a trap. Trapping
slows down the overall dynamics of the initial Markov process $X$.
Our aim is to analyze how these traps will be reflected in the asymptotic
behavior of the changed process $Y$.

To study the time asymptotic of random processes there is the useful
notion of Green measures, see for example \cite{KdS2020} for this
notion. More precisely, if $Z(t)$, $t\ge0$ is a random process in
$\X$ with $Z(0)=x\in\X$ and, for each $t\ge0$, $\ga_{t}^{x}$ denotes
its marginal distribution, then the Green measure of $Z$ is defined
by 
\[
\mathcal{G}(x,\mathrm{d}y):=\int_{0}^{\infty}\ga_{t}^{x}(\mathrm{d}y)\,\mathrm{d}t
\]
if this integral converges. In \cite{KdS2020} we have shown the existence
of the Green measures for certain classes of Markov processes in $\X$
with the necessary condition $d\geq3$. For $d=1,2$ we have to modify
this definition by means of a renormalized Green measure, namely 
\[
\mathcal{G}_{r}(x,\mathrm{d}y)=\lim_{T\to\infty}\frac{1}{N(T)}\int_{0}^{T}\mu_{t}^{x}(\mathrm{d}y)\,\mathrm{d}t.
\]
This approach (in a bit different framework) is well know in the theory
of additive functionals for Markov processes, see \cite{KMS20} for
an extended list of references.

The following lemma shows that the Green measure for $Y(t)$ does
not exists for a general inverse subordinator and arbitrary Markov
process $X(t)$. 
\begin{lem}
\label{lem:Green-meas-lemma}Under the assumptions formulated above
for any dimension $d$ the Green measure for $Y(t)$ does not exists. 
\end{lem}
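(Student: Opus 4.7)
The plan is to reduce the claim to showing that the time-averaging induced by the inverse subordinator destroys integrability pointwise in the subordinator variable. The key input is the subordination relation \eqref{SUBM} for marginals, together with the $t$-Laplace transform of $G_{t}(\tau)$ from Lemma \ref{lem:t-LT-G}.

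First I would fix a Borel set $A\subset\X$ and use \eqref{SUBM} with Tonelli's theorem to write, for every $T>0$,
\[
\int_{0}^{T}\nu_{t}^{x}(A)\,\mathrm{d}t=\int_{0}^{\infty}\mu_{\tau}^{x}(A)\left(\int_{0}^{T}G_{t}(\tau)\,\mathrm{d}t\right)\mathrm{d}\tau.
\]
This is the only place where the structure of $Y(t)=X(E(t))$ really enters; everything else reduces to analysis of the inner integral.

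The central step is to show that the inner integral blows up pointwise in $\tau$, i.e.\ $\int_{0}^{\infty}G_{t}(\tau)\,\mathrm{d}t=\infty$ for every $\tau\ge0$. By Lemma \ref{lem:t-LT-G},
\[
\int_{0}^{\infty}e^{-\lambda t}G_{t}(\tau)\,\mathrm{d}t=\mathcal{K}(\lambda)e^{-\tau\Phi(\lambda)},
\]
and assumption (H) gives $\mathcal{K}(\lambda)\to\infty$ and $\Phi(\lambda)\to0$ as $\lambda\to0+$; hence the right-hand side tends to $+\infty$ for each fixed $\tau$, and monotone convergence yields $\int_{0}^{\infty}G_{t}(\tau)\,\mathrm{d}t=\infty$. (Alternatively Theorem \ref{thm:main-result} together with $\int_{0}^{\infty}k(s)\,\mathrm{d}s=\mathcal{K}(0+)=\infty$ gives the same conclusion.)

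To finish I would pick a bounded Borel set $A$ on which $\mu_{\tau}^{x}(A)>0$ for $\tau$ in a set of positive Lebesgue measure — for instance a neighborhood of the starting point $x$, using right-continuity of $\tau\mapsto\mu_{\tau}^{x}$ at $\tau=0$ where the distribution is the Dirac mass at $x$. Combining with the previous step via Tonelli, monotone convergence in $T$ gives
\[
\int_{0}^{\infty}\nu_{t}^{x}(A)\,\mathrm{d}t=\int_{0}^{\infty}\mu_{\tau}^{x}(A)\left(\int_{0}^{\infty}G_{t}(\tau)\,\mathrm{d}t\right)\mathrm{d}\tau=+\infty,
\]
independently of the dimension $d$, so that $\mathcal{G}(x,\mathrm{d}y)$ for $Y$ is not a Radon measure. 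The only mild subtlety — and the main thing I would be careful about — is the qualitative condition on $X$ ensuring $\mu_{\tau}^{x}(A)$ is not identically zero for $\tau$ in a neighborhood of $0$; this is automatic under the standing assumption that $X(0)=x$ almost surely together with stochastic right-continuity of paths, so the argument is robust and in particular does not depend on whether $\mathcal{G}(x,\mathrm{d}y)$ for $X$ itself exists.
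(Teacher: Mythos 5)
Your proposal is correct and follows essentially the same route as the paper: apply the subordination relation \eqref{SUBM}, exchange the order of integration, and deduce from Lemma \ref{lem:t-LT-G} together with assumption (H) that $\int_{0}^{\infty}G_{t}(\tau)\,\mathrm{d}t=\mathcal{K}(0+)=+\infty$ for each fixed $\tau$, forcing the divergence. Your additional care (monotone convergence in $\lambda$, and choosing a set $A$ with $\mu_{\tau}^{x}(A)>0$ on a set of $\tau$ of positive measure) only tightens the same argument, which the paper states more tersely.
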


\begin{proof}
Using the subordination formula (\ref{SUBM}) we obtain 
\[
\int_{0}^{\infty}\nu_{t}^{x}\,\mathrm{d}t=\int_{0}^{\infty}\int_{0}^{\infty}\mu_{\tau}^{x}G_{t}(\tau)\,\mathrm{d}\tau\,\mathrm{d}t.
\]
But we know that for each $\tau$, it follows from \eqref{eq:H1},
\eqref{eq:H2} and \eqref{eq:LT-G-t} that 
\[
\int_{0}^{\infty}G_{t}(\tau)\,\mathrm{d}t=\mathcal{K}(0)=+\infty.
\]
Therefore, the considered integral is divergent. 
\end{proof}
As the Green measure does not exists for a general subordinated process
$Y$, we have to consider instead a renormalized Green measure. More
precisely, we would like to find the following limit 
\[
\mathcal{G}_{r}(x,\mathrm{d}y):=\lim_{T\to\infty}\frac{1}{N(T)}\int_{0}^{T}\nu_{t}^{x}(\mathrm{d}y)\,\mathrm{d}t.
\]

\begin{thm}
\label{thm:main-RTC-GM}Assume that the Markov process $X(t)$ in
$\X$, $d\geq3$ has a Green measure $\mathcal{G}(x,\mathrm{d}y)$
and define 
\begin{equation}
N(T):=\int_{0}^{T}k(s)\,\mathrm{d}s,\quad T\ge0.\label{eq:normalization}
\end{equation}
Then the renormalized Green measure for $Y(t)$ exists and 
\[
\mathcal{G}_{r}(x,\mathrm{d}y)=\mathcal{G}(x,\mathrm{d}y).
\]
\end{thm}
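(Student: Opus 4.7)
The plan is to use the subordination formula \eqref{SUBM} to rewrite the renormalized integral, interchange the order of integration via Fubini--Tonelli, and then apply Theorem \ref{thm:main-result} combined with dominated convergence to identify the limit.

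For a bounded Borel set $A\subset\X$ with $\mathcal{G}(x,A)<\infty$ (equivalently, for a bounded test function of compact support), substituting \eqref{SUBM} and swapping the two nonnegative integrals yields
\[
\frac{1}{N(T)}\int_0^T \nu_t^x(A)\,\mathrm{d}t \;=\; \int_0^\infty \mu_\tau^x(A)\,F_T(\tau)\,\mathrm{d}\tau,\qquad F_T(\tau)\;:=\;\frac{1}{N(T)}\int_0^T G_t(\tau)\,\mathrm{d}t.
\]
Theorem \ref{thm:main-result} now supplies exactly the two facts one needs about $F_T$: the pointwise limit $F_T(\tau)\to 1$ as $T\to\infty$ for each fixed $\tau\ge 0$ (this is \eqref{eq:limit-G-k}), and a uniform-in-$\tau$ bound on $M_T(G_T(\cdot))$ which, combined with the equivalence $M_T(G_T(\tau))\sim M_T(k(t))$, upgrades to $F_T(\tau)\le C$ uniformly in $\tau$ and all sufficiently large $T$.

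Since $X$ has a Green measure by hypothesis, the map $\tau\mapsto \mu_\tau^x(A)$ is integrable on $[0,\infty)$ with total mass $\mathcal{G}(x,A)$, and so $C\,\mu_\tau^x(A)$ serves as an integrable majorant. Dominated convergence then gives
\[
\lim_{T\to\infty}\int_0^\infty \mu_\tau^x(A)\,F_T(\tau)\,\mathrm{d}\tau \;=\; \int_0^\infty \mu_\tau^x(A)\,\mathrm{d}\tau \;=\; \mathcal{G}(x,A),
\]
which identifies $\mathcal{G}_r(x,\cdot)=\mathcal{G}(x,\cdot)$ on a generating class of bounded Borel sets, and by standard approximation also against $\phi\in C_c(\X)$.

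The step I expect to require the most care is securing the joint uniform bound on $F_T(\tau)$ in both $T$ and $\tau$ simultaneously: Theorem \ref{thm:main-result} packages the pointwise asymptotic together with $\tau$-uniform boundedness of $M_T(G_T(\cdot))$, and one must verify that these really combine into a single constant $C$ valid for all large $T$---this is where the admissibility conditions \ref{eq:A1}--\ref{eq:A2} on $k$ are used. A secondary clarification worth pinning down is the topology of measure convergence employed; since $\mathcal{G}(x,\cdot)$ is generally only $\sigma$-finite, the natural formulation is on bounded Borel sets (or against compactly supported continuous test functions), which is the sense in which the argument above establishes the theorem.
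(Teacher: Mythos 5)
Your proof is correct and follows essentially the same route as the paper: subordination formula \eqref{SUBM}, Fubini, and Theorem \ref{thm:main-result} to pass to the limit. The paper's own proof is terser --- it simply cites Fubini and Theorem \ref{thm:main-result} --- whereas you explicitly supply the dominated convergence step using the $\tau$-uniform bound on $M_T\big(G_T(\tau)\big)$ as the source of the integrable majorant, which is a faithful and more careful rendering of the same argument.
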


\begin{proof}
Using the subordination relation \eqref{SUBM} the renormalized Green
measure $\mathcal{G}_{r}(x,\mathrm{d}y)$ may be written as 
\[
\mathcal{G}_{r}(x,\mathrm{d}y)=\lim_{T\to\infty}\frac{1}{N(T)}\int_{0}^{T}\int_{0}^{\infty}\mu_{\tau}^{x}(\mathrm{d}y)G_{t}(\tau)\,\mathrm{d}\tau\,\mathrm{d}t.
\]
Now using Fubini theorem and Theorem \ref{thm:main-result} it follows
that 
\[
\mathcal{G}_{r}(x,\mathrm{d}y):=\lim_{T\to\infty}\frac{1}{N(T)}\int_{0}^{T}\nu_{t}^{x}(\mathrm{d}y)\,\mathrm{d}t=\int_{0}^{\infty}\mu_{t}^{x}(\mathrm{d}y)\,\mathrm{d}t=\mathcal{G}(x,\mathrm{d}y).
\]
This shows the statement of the theorem and finish the proof. 
\end{proof}
\begin{rem}
As we mentioned at the beginning of this section, random time produces
trapping (or environments, or friction) effects in the Markov dynamics.
That is one reason why in physics such processes are very useful.
As the trapping slows down the Markov dynamics, then the usual definition
of Green measures produces a divergent integral. To compensate this
divergence we have to consider a renormalization with a time depended
factor. The time asymptotic of the renormalized Green measure coincides
with the Green measure of the initial Markov process. 
\end{rem}

\section{Applications to Fractional Dynamics}

\label{sec:Applications-FD}Let $u(t,x)$ be the solution of equation
\eqref{KE} and $v(t,x)$ the corresponding solution of the fractional
equation \eqref{FKE}. Our goal is to compare the behaviors in $t$
for these solutions. To this end, at first we restrict the class of
Markov processes under considerations. Namely, let $a:\X\to\R$ be
a fixed kernel with the following properties: 
\begin{enumerate}
\item Symmetric, $a(-x)=a(x)$, for every $x\in\X$. 
\item Positive, continuous and bounded, $a\geq0$, $a\in C_{b}(\X)$. 
\item Integrable 
\[
\int_{\X}a(y)\,\mathrm{d}y=1.
\]
\end{enumerate}
Consider the generator $L$ defined by 
\[
(Lf)(x)=\int_{\X}a(x-y)[f(y)-f(x)]\,\mathrm{d}y=(a*f)(x)-f(x),\quad x\in\X.
\]
In particular, $L^{\ast}=L$ in $L^{2}(\X)$ and $L$ is a bounded
linear operator in all $L^{p}(\X)$, $p\ge1$. We call this operator
the jump generator with jump kernel $a$. The corresponding Markov
process is of a pure jump type and is known in stochastic as compound
Poisson process, see \cite{Skorohod1991}.

We make the following assumptions on the kernel $a$. 
\begin{description}
\item [{(A)}] The jump kernel $a$ is such that the Fourier transform $\hat{a}\in L^{1}(\X)$
and it has finite second moment, that is, 
\[
\int_{\X}|x|^{2}a(x)\,\mathrm{d}x<\infty.
\]
\end{description}
Define the Banach space $CL(\X)$ as the set of all bounded continuous
and integrable functions on $\X$, that is, $CL(\X)=C_{b}(\X)\cap L^{1}(\X)$.
The norm in this space is constructed as the sum of $C_{b}(\X)$ and
$L^{1}(\X)$ norms. The following theorem was shown in \cite{KdS2020}. 
\begin{thm}
\label{thm:average}Let $a$ be a kernel which satisfies all the above
assumptions and $d\geq3$. Consider the solution $u(t,x)$ to \eqref{KE}
with an initial data $f\in CL(\X)$. Then 
\[
\int_{0}^{\infty}u(t,x)\,\mathrm{d}t=\int_{\X}f(y)\mathcal{G}(x,\mathrm{d}y),
\]

where $\mathcal{G}(x,\mathrm{d}y)$ is the Green measure of the corresponding
Markov jump process. 
\end{thm}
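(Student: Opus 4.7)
The plan is to reduce the statement to Fubini's theorem applied to the solution formula for the Kolmogorov equation. Since the compound Poisson semigroup $P_{t}=e^{t(a*-\mathrm{id})}$ is a bounded linear semigroup on $C_{b}(\X)$ and on $L^{1}(\X)$, and $L$ is the generator of the process $X(t)$, the unique (classical or mild) solution of \eqref{KE} with initial datum $f\in CL(\X)$ is
\[
u(t,x)=\E_{x}\bigl[f(X(t))\bigr]=\int_{\X}f(y)\,P_{t}(x,\mathrm{d}y)=\int_{\X}f(y)\,\mu_{t}^{x}(\mathrm{d}y).
\]
Granting this, the target identity reads $\int_{0}^{\infty}\!\!\int_{\X}f(y)\,\mu_{t}^{x}(\mathrm{d}y)\,\mathrm{d}t=\int_{\X}f(y)\,\mathcal{G}(x,\mathrm{d}y)$, which by the definition of $\mathcal{G}(x,\mathrm{d}y)$ is nothing more than the interchange of the $t$- and $y$-integrations. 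The entire proof thus hinges on establishing absolute integrability so that Fubini applies.

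For the integrability estimate I would exploit the Fourier analytic structure specific to compound Poisson processes. By symmetry of $a$, its Fourier transform $\h a$ is real-valued with $|\h a|\le 1$, and the characteristic function of $X(t)-x$ is $\exp\bigl(t(\h a(\xi)-1)\bigr)$. The finite second moment hypothesis gives $1-\h a(\xi)\sim c|\xi|^{2}$ as $\xi\to 0$, and the hypothesis $\h a\in L^{1}(\X)$ ensures that for $t>0$ the jump part of $P_{t}(x,\mathrm{d}y)$ has a continuous bounded density $q_{t}(x,y)$ with
\[
\|q_{t}\|_{\infty}\le(2\pi)^{-d}\!\int_{\X}e^{-t(1-\h a(\xi))}\,\mathrm{d}\xi\le C\,t^{-d/2}\quad\text{as }t\to\infty.
\]
Splitting $\mu_{t}^{x}(\mathrm{d}y)=e^{-t}\delta_{x}(\mathrm{d}y)+(1-e^{-t})q_{t}^{\sharp}(x,y)\,\mathrm{d}y$, the atomic part contributes $\int_{0}^{\infty}e^{-t}|f(x)|\,\mathrm{d}t<\infty$ directly, while for the absolutely continuous part I would use the two bounds $\int_{\X}|f(y)|q_{t}(x,y)\,\mathrm{d}y\le\|f\|_{\infty}$ (good for small $t$) and $\int_{\X}|f(y)|q_{t}(x,y)\,\mathrm{d}y\le\|q_{t}\|_{\infty}\|f\|_{1}\le C\,t^{-d/2}\|f\|_{1}$ (good for large $t$). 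Because $f\in CL(\X)=C_{b}(\X)\cap L^{1}(\X)$ both bounds are finite, and since $d\ge 3$ the function $t^{-d/2}$ is integrable at infinity, yielding
\[
\int_{0}^{\infty}\!\!\int_{\X}|f(y)|\,P_{t}(x,\mathrm{d}y)\,\mathrm{d}t<\infty.
\]

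With absolute integrability in hand, Fubini yields
\[
\int_{0}^{\infty}u(t,x)\,\mathrm{d}t=\int_{\X}f(y)\left(\int_{0}^{\infty}P_{t}(x,\mathrm{d}y)\,\mathrm{d}t\right)=\int_{\X}f(y)\,\mathcal{G}(x,\mathrm{d}y),
\]
and the inner time-integral is precisely the definition of the Green measure, whose finiteness as a Radon measure on $\X$ is part of the hypothesis carried over from the authors' earlier work. The main obstacle in this scheme is the $\|q_{t}\|_{\infty}\le C\,t^{-d/2}$ estimate, since it is here that both structural hypotheses on $a$ (namely $\h a\in L^{1}$ near infinity and the second moment condition near zero) are actually used; this is also where the necessity of $d\ge 3$ becomes transparent, exactly in parallel with the transience threshold for random walks. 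In dimensions $d=1,2$ the bound $t^{-d/2}$ is not integrable at infinity, which is the analytic manifestation of recurrence and the reason one must pass to the renormalized Green measure framework developed earlier in the paper.
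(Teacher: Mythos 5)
The paper does not actually prove Theorem \ref{thm:average}; it is quoted verbatim from \cite{KdS2020}, so there is no in-paper argument to compare against. Your Fourier-analytic route (split off the atom $e^{-t}\delta_{x}$, bound the absolutely continuous part in $L^{\infty}$ by $Ct^{-d/2}$, interpolate against $\|f\|_{\infty}$ for small $t$ and $\|f\|_{1}$ for large $t$, then apply Fubini) is the natural self-contained proof and is essentially the same mechanism as in \cite{KdS2020}, where the computation is organised instead around the resolvent at zero, $\int_{0}^{\infty}e^{tL}f\,\mathrm{d}t=(-L)^{-1}f$, with Green kernel determined by $\widehat{G}=\hat{a}/(1-\hat{a})$; the condition $d\ge3$ enters there as integrability of $(1-\hat{a}(\xi))^{-1}\sim c|\xi|^{-2}$ near the origin, which is the Laplace-transform counterpart of your $t^{-d/2}$ tail.

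One step is wrong as literally written: the intermediate bound $\|q_{t}\|_{\infty}\le(2\pi)^{-d}\int_{\X}e^{-t(1-\hat{a}(\xi))}\,\mathrm{d}\xi$ has an infinite right-hand side, because $\hat{a}(\xi)\to0$ at infinity (Riemann--Lebesgue), so the integrand tends to the positive constant $e^{-t}$ and is not integrable over $\X$. The divergence is exactly the Fourier signature of the atom you have already removed, so the inversion must be applied to $e^{t(\hat{a}(\xi)-1)}-e^{-t}=e^{-t}\bigl(e^{t\hat{a}(\xi)}-1\bigr)$ rather than to $e^{t(\hat{a}(\xi)-1)}$. The repair is routine but uses both hypotheses in (A) in the places you indicate: near $\xi=0$ the second-moment condition (plus nondegeneracy of the covariance, which follows since $a$ is a continuous probability density) gives $1-\hat{a}(\xi)\ge c|\xi|^{2}$ and hence the $Ct^{-d/2}$ contribution; on $|\xi|\ge\eps$ one has $\sup_{|\xi|\ge\eps}|\hat{a}(\xi)|=1-\delta<1$, whence $e^{-t}|e^{t\hat{a}(\xi)}-1|\le C|\hat{a}(\xi)|e^{-\delta t}$, and the hypothesis $\hat{a}\in L^{1}(\X)$ makes this integrable in $\xi$ with an exponentially decaying constant. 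With that substitution your estimate $\|q_{t}\|_{\infty}\le C t^{-d/2}$ holds and the rest of the argument (absolute integrability for $d\ge3$, Fubini, identification of the inner integral with $\mathcal{G}(x,\mathrm{d}y)$) goes through.
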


This result gives an averaged characteristic of the dynamics $u(t,x)$
corresponding to the Markov processes via the Kolmogorov equation.
On the other hand, for the fractional dynamics $v(t,x)$ (the solution
of equation \eqref{FKE}) we have only information about the Cesaro
mean 
\[
M_{t}(f)=\frac{1}{t}\int_{0}^{t}v(s,x)\,\mathrm{d}s.
\]
The asymptotic of this mean was studied in \cite{KKdS19}. In particular,
when $\Phi(\lambda)=\lambda^{\alpha}$, $0<\alpha<1$ the kernel $k(t)=\frac{t^{-\alpha}}{\Gamma(1-\alpha)}$
and the GFD corresponds to the Caputo-Dzhrbashyan fractional derivative
of order $\alpha$. For this class of kernels we have 
\[
M_{t}(f)\sim Ct^{-\alpha},\quad t\to\infty.
\]

With the help of Theorem \ref{thm:average} we may also derive an
average result for the fractional dynamics $v(t,x)$. 
\begin{thm}
\label{thm:asymptotic-FKE}Under assumptions of Theorem \ref{thm:average}
holds

\[
\frac{1}{N(t)}\int_{0}^{t}v(s,x)\,\mathrm{d}s\sim\int_{\X}f(y)\mathcal{G}(x,\mathrm{d}y),\quad t\to\infty.
\]
\end{thm}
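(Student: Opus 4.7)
The plan is to start from the subordination formula \eqref{SUB} applied pointwise, then swap the time integral and the $\tau$-integral so the normalization factor $N(t)$ can act directly on the density $G_s(\tau)$. Concretely, since $v(s,x)=\int_{0}^{\infty}u(\tau,x)G_{s}(\tau)\,\mathrm{d}\tau$, I would write
\[
\frac{1}{N(t)}\int_{0}^{t}v(s,x)\,\mathrm{d}s=\int_{0}^{\infty}u(\tau,x)\cdot\frac{1}{N(t)}\int_{0}^{t}G_{s}(\tau)\,\mathrm{d}s\,\mathrm{d}\tau,
\]
the interchange being justified first for $|f|\in CL(\X)$ (so all integrands are nonnegative and Fubini applies) and then by linearity for general $f\in CL(\X)$.

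Next, I would invoke Theorem \ref{thm:main-result}, which gives, for every fixed $\tau\ge0$,
\[
\frac{1}{N(t)}\int_{0}^{t}G_{s}(\tau)\,\mathrm{d}s=\frac{\int_{0}^{t}G_{s}(\tau)\,\mathrm{d}s}{\int_{0}^{t}k(s)\,\mathrm{d}s}\longrightarrow 1\qquad (t\to\infty),
\]
together with the uniform-in-$\tau$ bound stated at the end of that theorem. Combined with the integrability of $\tau\mapsto u(\tau,x)$ guaranteed by Theorem \ref{thm:average} (applied to $f$ and $|f|$, both in $CL(\X)$), this should allow a dominated convergence argument, yielding
\[
\lim_{t\to\infty}\frac{1}{N(t)}\int_{0}^{t}v(s,x)\,\mathrm{d}s=\int_{0}^{\infty}u(\tau,x)\,\mathrm{d}\tau=\int_{\X}f(y)\,\mathcal{G}(x,\mathrm{d}y),
\]
where the last equality is Theorem \ref{thm:average} itself.

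The main obstacle is the dominated convergence step: Theorem \ref{thm:main-result} provides pointwise convergence in $\tau$ plus uniform boundedness of $M_t(G_s(\tau))$ in $\tau$, but to pass the limit under the $\tau$-integral one needs an integrable (in $\tau$), $t$-independent dominating function for $u(\tau,x)\cdot N(t)^{-1}\int_{0}^{t}G_{s}(\tau)\,\mathrm{d}s$. I would obtain this by combining the limit relation (which forces the ratio to lie in, say, $[0,2]$ for $t$ larger than some $t_0(\tau)$) with a uniform-in-$(t,\tau)$ control coming from the monotonicity of $\int_{0}^{t}k(s)\,\mathrm{d}s$ and the Laplace-transform identity \eqref{eq:LT-G-t}; the dominating envelope can then be taken as a constant multiple of $|u(\tau,x)|$, which is integrable over $\tau\in[0,\infty)$ by Theorem \ref{thm:average}. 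Alternatively, one can bypass this by observing that $\int_{0}^{t}v(s,x)\,\mathrm{d}s=\int_{\X}f(y)\bigl(\int_{0}^{t}\nu_{s}^{x}(\mathrm{d}y)\,\mathrm{d}s\bigr)$ and applying Theorem \ref{thm:main-RTC-GM} together with the $CL(\X)$-regularity of $f$; this reduces the problem to the convergence of $\frac{1}{N(t)}\int_{0}^{t}\nu_{s}^{x}\,\mathrm{d}s$ to $\mathcal{G}(x,\cdot)$ already established there, and testing against $f\in C_{b}\cap L^{1}$ gives the stated asymptotic.
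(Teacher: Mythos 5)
Your proposal follows essentially the same route as the paper: apply the subordination formula \eqref{SUB}, interchange the integrals by Fubini, use Theorem \ref{thm:main-result} to send $\frac{1}{N(t)}\int_{0}^{t}G_{s}(\tau)\,\mathrm{d}s\to1$ pointwise in $\tau$, and conclude with Theorem \ref{thm:average}. You are in fact more careful than the paper, which silently passes the limit under the $\tau$-integral; your explicit attention to a $t$-independent integrable dominating function is exactly the detail the published proof leaves implicit.
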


\begin{proof}
Using (\ref{SUB}) we have 
\[
\frac{1}{N(t)}\int_{0}^{t}v(s,x)\,\mathrm{d}s=\frac{1}{N(t)}\int_{0}^{t}\int_{0}^{\infty}u(\tau,x)G_{s}(\tau)\,\mathrm{d}\tau\,\mathrm{d}s.
\]
Again it follows from Fubini theorem, Theorem \ref{thm:main-result}
and the definition of $N(t)$ that 
\[
\frac{1}{N(t)}\int_{0}^{t}v(s,x)\,\mathrm{d}s=\int_{0}^{\infty}u(\tau,x)\left(\frac{1}{N(t)}\int_{0}^{t}G_{s}(\tau)\,\mathrm{d}s\right)\,\mathrm{d}\tau\underset{t\to\infty}{\longrightarrow}\int_{0}^{\infty}u(\tau,x)\,\mathrm{d}\tau.
\]
Then the result of the theorem follows from Theorem \ref{thm:average}. 
\end{proof}
\begin{rem}
\label{rem:space-decay} 
\begin{enumerate}
\item For concrete cases of jump kernels we have more information about
space decay of the Green measures, see \cite{KdS2020}. It gives the
possibility to extend the statement of Theorem \ref{thm:asymptotic-FKE}
to a wider class of the initial data $f$. 
\item The same result is true for the Brownian motion $B(t)$ in $\X$ for
$d\geq3$, see \cite{KdS2020}. 
\end{enumerate}
\end{rem}

\subsection*{Acknowledgments}

This work has been partially supported by Center for Research in Mathematics
and Applications (CIMA) related with the Statistics, Stochastic Processes
and Applications (SSPA) group, through the grant UIDB/MAT/04674/2020
of FCT-Funda{\c c\~a}o para a Ci{\^e}ncia e a Tecnologia, Portugal.


\end{document}